\newcommand{\supeq}{\geqslant}
\newcommand{\infeq}{\leqslant}
\DeclareMathOperator*{\Argmin}{Argmin}
\DeclareMathOperator*{\tr}{tr}
\newtheorem{theo}{Theorem}
\newtheorem{coro}{Corollary}
\newtheorem{lemma}{Lemma}
\newtheorem{remark}{Remark}
\newtheorem{example}{Example}
\title{\LARGE \bf Static State and Output Feedback Synthesis for Time-Delay Systems*} 
\author{Matthieu Barreau$^{1}$, Fr{\'e}d{\'e}ric Gouaisbaut$^{1}$ and Alexandre Seuret$^{1}$
\thanks{*This work is supported by the ANR project SCIDiS contract number 15-CE23-0014.}
\thanks{$^{1}$ LAAS-CNRS, Universit\'e de Toulouse, CNRS, UPS, Toulouse, France. (e-mail: barreau,seuret,fgouaisb@laas.fr).}%
}
\begin{document}

\maketitle
\thispagestyle{empty}
\pagestyle{empty}

\begin{abstract}
In this paper, the design of a static feedback gain for a linear system subject to an input delay is studied. This synthesis is based on a stability analysis conducted using  Lyapunov-Krasovskii theorem and Bessel-Legendre inequalities expressed in terms of LMIs. Some bilinear non-convex matrix inequalities are obtained to go from analysis to synthesis. They are then difficult to solve and an iterative LMI procedure is given which takes advantage of the elimination lemma. Naturally, slack variables are introduced and then, following an optimization process, values for them are proposed to reduce the conservatism. The two main corollaries discuss the static state and output feedback synthesis. Finally, a comparison is proposed and shows that this formulation introduces small conservatism. 
\end{abstract}


\section{Introduction}
The analysis and the control of time-delay systems is a very active research area since several decades and this for both practical and theoretical reasons. Indeed, delays are  very common phenomena in control loops, modeling, for instance, the transport time in communication networks \cite{Zampieri:IFAC:2008}. It is also a simple way to model complex dynamics \cite{opac-b1100602} embedding those phenomena into a rough and maybe unprecise delay. From a theoretical point of view, the introduction of a delay in the control loop leads to an infinite dimensional system whose properties such as stability and performances remain complicated to study \cite{norm}. The effects of the delay on the closed loop system properties range from a classical degradation of the performances to instability. Surprisingly, one may also obtain stability by adding delays \cite{opac-b1100602,norm}. The problem of designing a control law for stabilizing a delay system thus remains an open problem.

Roughly speaking, one can classify the control laws in two categories. The first one considers infinite dimensional controllers, taking into account finally the infinite nature of the closed loop. The idea, the so-called Smith predictor, was originally proposed by Smith and also Osipov in a state space approach (see for instance the interesting survey \cite{Palmor}) . The main idea is to design of controller in order to pull out the delay of the closed loop. The stability of the closed loop is therefore ensured using classical tools. Recently, we assist to a renewal of this technique, where the delay is viewed as a transport equation and the predictor is re-interpreted as a backstepping method \cite{krstic}. Notice that this approach has been then extended to a wide class of PDEs \cite{krstic}.

Nevertheless, the main problem of the approach is that it is in general mandatory to know perfectly the delay even if some papers consider the mismatch between the delay and its value used in the control law. The implementation is also a complicated problem \cite{V-AssDamLafRich:CDC:1999}. 

Another idea is closer to the robust approach. In that case, the delay is viewed as a perturbation element and a classical state feedback $u(t)=Kx(t)$ or output state feedback $u(t)=KCx(t)$ is designed to robustly stabilize the closed loop system despite the delay.  Apparently less complex than the first type of controller, this approach was very popular a decade ago. Basically, it was relying on the choice of an adequate Lyapunov-Krasovskii functional and some optimization scheme to be solved. An optimized choice of the Lyapunov parameters allows designing a controller gain with a larger delay upper bound. Nevertheless, all the optimization schemes proposed fall into the Bilinear Matrix Inequality (BMI) category, well-known to be difficult to solve \cite{FRISEURIC:AUT:04} since the controller gain is assumed to be unknown. Generally, the approaches proposed to tackle this problem are always based on the same idea: fixing some parameters of the BMI and then obtain an LMI condition. Some griding techniques may be also adopted but it generally results in very conservative results \cite{li1997delay, fridman2002improved,7045593}. This general heuristic procedure may be sometimes improved by considering some slack variables even if, at the end, one faces the same problem \cite{zhang2005delay,CHEN20061067}.

In this paper, we focus on the design of an output state feedback without any information on the delay value. Considering the Lyapunov functional proposed by \cite{seuret:hal-01065142}, a Bilinear Matrix Inequality is proposed. An extensive use of the elimination lemma and an optimized choice of some slack variables allow reducing the optimisation problem into a set of LMIs. At a price of an increasing conservatism, one obtains an optimisation procedure with a limited numerical burden.

\textbf{Notations.} Throughout the paper, $\mathbb{R}^n$ stands for the $n$ dimensional Euclidian space, $\mathbb{R}^{n \times m}$ for the set of all $n \times m$ matrices. $\mathbb{S}^n$ is the subset of $\mathbb{R}^{n \times n}$ of symmetric matrices such that $P \in \mathbb{S}_+^n$ or equivalently $P \succ 0$ denotes a symmetric positive definite matrix.  For any square matrices $A$ and $B$, three operations are defined as follow: $\text{He}(A) = A + A^{\top}$, $\tr$ is the trace operator and $\text{diag}(A,B) = \left[ \begin{smallmatrix}A & 0\\ 0 & B \end{smallmatrix} \right]$. The notations $I_n$ and $0_{n, m}$ denote the $n$ by $n$ identity matrix and the null matrix of size $n  \times m$. The state variable $x$ can be represented using the Shimanov notation \cite{kolmanovskii}:
\[
	x_t: \left\{ \begin{array}{ccl}
		[-h, 0] & \to & \mathbb{R}^n\\
		\tau    &  \mapsto & x(t+\tau)
	\end{array}
	\right.
\]

\section{Problem Statement}

Throughout this paper, we are interested in the design of static state and output feedback controller gains for the following system subject to an input delay:
\begin{equation} \label{eq:problem}
	\left\{
		\begin{array}{ll}
			\dot{x}(t) = Ax(t) + Bu(t-h), & \quad \forall t > 0,\\
			y(t) = Cx(t), & \quad \forall t > 0, \\
			u(t) = Ky(t), & \quad \forall t > 0, \\
			x(t) = \phi(t), & \quad \forall t \in [-h, 0],
		\end{array}
	\right.
\end{equation}
with $h > 0$ constant and known, $x(t) \in \mathbb{R}^n$, $A \in \mathbb{R}^{n \times n}, B \in \mathbb{R}^{n \times m}, C \in \mathbb{R}^{p \times n}$ and $K \in \mathbb{R}^{m \times p}$. The initial condition is $\phi \in \mathcal{C} ([-h, 0], \mathbb{R}^n)$. It is well known from Theorem 2.1 in \cite{kolmanovskii} that with these conditions, there is a unique solution to this system on $\mathbb{R}^+$.

This system can be largely encountered in practical situations. For example, as in the case of a regenerative chatter, a vibration absorber or a network controlled system as explained in \cite{opac-b1100602,OLGAC199493}.

The purpose of this article is to adopt efficient stability analysis of time-delay systems in terms of Linear Matrix Inequalities (LMI) to synthesize the controller gain $K$ such that system \eqref{eq:problem} becomes asymptotically stable. If $\text{rank}(C) = n$, this problem is known as ``Static State Feedback'' (SSF), and if $\text{rank}(C) < n$ this is a ``Static Output Feedback'' (SOF).

\section{Preliminary Results}

In this section, two technical tools used to assess stability of system \eqref{eq:problem} are introduced.

\subsection{The Elimination Lemma}

The conditions will be expressed in terms of Linear Matrix Inequality (LMI) and especially of this form:
\begin{equation} \label{eq:LMIgen}
	\exists Q \in \mathbb{S}^{n \mathcal N}\!\!\!, \ \ \ x^{\top} Q x \leq 0, \ \ \ \forall x \in \mathbb{R}^{n \mathcal N}\!\!\!, \quad \text{s.t.} M x = 0, 
\end{equation}
with an integer $\mathcal N > 1$ and $M = \left[ \begin{matrix} A_1 & A_2 & \cdots & A_{\mathcal N} \end{matrix} \right]$ of appropriate dimension. In other words, that means $Q$ is a negative semi-definite matrix on the kernel of matrix $M$.

To transform this LMI into a semi-definite programming formulation, we can use the elimination lemma.
\begin{lemma}[Elimination Lemma \cite{Svariable}] \label{sec:finsler}
	For any $Q \in \mathbb{S}^{n \mathcal{N} }$ and $M \in \mathbb{R}^{n \times \mathcal{N} n}$, the following statements are equivalent:
	\begin{enumerate}	
		\item $x^{\top} Q x < 0$ for all $x \in \mathbb{R}^{n \mathcal N} \text{ such that } Mx = 0$,
		\item $\exists Y \in \mathbb{R}^{n \times n \mathcal{N}}, Q + \text{He} \left( M^{\top} Y \right) \prec 0$,
		\item ${M^{\perp}}^{\top} Q M^{\perp} \prec 0$ where $MM^{\perp} = 0$.
	\end{enumerate}
\end{lemma}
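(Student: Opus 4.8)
The plan is to establish the cycle of implications $(2)\Rightarrow(1)$, $(1)\Leftrightarrow(3)$, and $(3)\Rightarrow(2)$, fixing once and for all a matrix $M^{\perp}$ whose columns form a basis of $\ker M$ (so that $MM^{\perp}=0$ and $M^{\perp}$ has full column rank), and reading (1) for $x\neq 0$ since the strict inequality is vacuous at $x=0$. The two easy steps I would dispatch first. For $(2)\Rightarrow(1)$: if $Mx=0$ then $x^{\top}M^{\top}=0$, so $x^{\top}(Q+\text{He}(M^{\top}Y))x=x^{\top}Qx$, and negativity is inherited. For $(1)\Leftrightarrow(3)$: the map $z\mapsto M^{\perp}z$ is a bijection from $\mathbb{R}^{\dim\ker M}$ onto $\{x:\ Mx=0\}$ carrying $z\neq 0$ to $x\neq 0$ (by full column rank of $M^{\perp}$), and substituting $x=M^{\perp}z$ turns $x^{\top}Qx$ into $z^{\top}{M^{\perp}}^{\top}QM^{\perp}z$, so (1) says exactly ${M^{\perp}}^{\top}QM^{\perp}\prec 0$.

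The substantial step is $(3)\Rightarrow(2)$, which I would prove by a change of basis adapted to $\ker M$. Complete $M^{\perp}$ to an orthogonal matrix $U=[\,M^{\perp}\ \ V\,]$ with the columns of $V$ spanning $(\ker M)^{\perp}=\mathrm{Im}(M^{\top})$. Then $MU=[\,0\ \ MV\,]$ with $MV$ of full column rank, hence $(MV)^{\top}$ surjective; writing $YU=[\,Y_1\ \ Y_2\,]$ gives $U^{\top}M^{\top}YU=\left[\begin{smallmatrix}0&0\\ R_{12}&R_{22}\end{smallmatrix}\right]$ where $R_{12}=(MV)^{\top}Y_1$ and $R_{22}=(MV)^{\top}Y_2$ can be made arbitrary by choosing $Y$. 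Partitioning $U^{\top}QU=\left[\begin{smallmatrix}Q_{11}&Q_{12}\\ Q_{12}^{\top}&Q_{22}\end{smallmatrix}\right]$ conformally, assumption (3) reads $Q_{11}={M^{\perp}}^{\top}QM^{\perp}\prec 0$. Since $U$ is orthogonal and $U^{\top}\text{He}(M^{\top}Y)U=\text{He}(U^{\top}M^{\top}YU)$, the target inequality $Q+\text{He}(M^{\top}Y)\prec 0$ becomes $\left[\begin{smallmatrix}Q_{11}&Q_{12}+R_{12}^{\top}\\ Q_{12}^{\top}+R_{12}&Q_{22}+\text{He}(R_{22})\end{smallmatrix}\right]\prec 0$; taking $R_{12}=-Q_{12}^{\top}$ to kill the coupling and $R_{22}=-\beta I$ with $\beta$ large makes the block-diagonal matrix negative definite, which produces a valid $Y$ and proves (2).

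The hard part is precisely this last implication, and its core is the observation that the slack variable $Y$ acts only on the components of $Q$ transverse to $\ker M$, leaving the $\ker M$-block ${M^{\perp}}^{\top}QM^{\perp}$ untouched, so that (3) is exactly the obstruction that cannot be removed by $Y$. If one prefers to avoid the explicit basis, the same step admits a Finsler-type compactness argument: show $Q-\alpha M^{\top}M\prec 0$ for $\alpha$ sufficiently large (otherwise unit vectors $x_k$ with $x_k^{\top}Qx_k\supeq\alpha_k\|Mx_k\|^2$ and $\alpha_k\to\infty$ would, by compactness of the unit sphere, subconverge to a unit vector of $\ker M$ on which $Q$ is nonnegative, contradicting (1)), and then set $Y=-\tfrac{\alpha}{2}M$. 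Minor bookkeeping I would verify along the way: that $MV$ has full column rank because $M$ restricted to $\mathrm{Im}(M^{\top})$ is injective, and that the sizes of $R_{12}$ and $R_{22}$ match the off-diagonal and $(2,2)$ blocks of $U^{\top}QU$.
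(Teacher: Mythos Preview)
Your proof is correct and considerably more complete than the paper's own. The paper only sketches the implication $(1)\Rightarrow(2)$, invoking Finsler's lemma as a black box to get $\sigma\in\mathbb{R}$ with $Q-\sigma M^{\top}M\prec 0$ and then setting $Y=\tfrac{\sigma}{2}M$ (with a sign slip; it should be $Y=-\tfrac{\sigma}{2}M$, as you have). Your compactness argument for $(3)\Rightarrow(2)$ is precisely this Finsler step made explicit, so that portion of your proposal coincides with the paper's route but supplies the missing justification. Your block-decomposition argument, on the other hand, is a genuinely different and more constructive approach: it exhibits directly how the slack $Y$ can kill the coupling block and dominate the transverse $(2,2)$ block, while the kernel block $Q_{11}={M^{\perp}}^{\top}QM^{\perp}$ is left untouched by any choice of $Y$; this makes the role of condition~(3) transparent without any compactness. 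The paper's shorter route has the advantage of immediately identifying $Y\in\mathrm{span}(M)$, a structural fact it exploits later in Lemma~\ref{sec:finsler2}.

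One bookkeeping nit: to ``complete $M^{\perp}$ to an orthogonal matrix $U$'' you need the columns of $M^{\perp}$ to be orthonormal, which you did not impose when you fixed $M^{\perp}$ at the outset. Either choose $M^{\perp}$ orthonormal from the start, or simply take $U$ invertible rather than orthogonal (congruence still preserves definiteness, and $MV$ retains full column rank since $V$ can be any complement of $\ker M$); the argument goes through unchanged.
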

\begin{proof}
	 The main idea of the proof for (1) $\Rightarrow$ (2) is reminded for a better understanding of the paper.  Assume (1) holds. Then, according to Finsler's lemma (developed in \cite{finsler}), $\exists \sigma \in \mathbb{R}, Q - \sigma M^{\top} M \prec 0$. Then, taking $Y = \frac{\sigma}{2}M$ with $\sigma \in \mathbb{R}$ in (2) concludes the proof. That means there always exists $Y$ belonging to $\text{span}(M)$.
\end{proof}

The main advantage of this formulation is that it separates the LMI into two parts: the first one is related to the stability of an extended system and the other one is related to the relation between the extra-states.

\subsection{Stability analysis: a hierarchy of LMIs}

Consider the following time-delay system for $t > 0$:
\begin{equation} \label{eq:modifiedProblem}
	\dot{x}(t) = Ax(t) + A_d x(t-h),
\end{equation}
with $A$ and $A_d$ known matrices of appropriate dimension and $\phi$ is initial condition on $[-h, 0]$. There exists a hierarchy of LMI conditions derived using a Lyapunov functional and Bessel inequality. The result obtained is Theorem 5 in \cite{seuret:hal-01065142} and a slightly modified version is presented below:
\begin{theo} \label{sec:theo}
	For a given integer $N > 0$ and a constant delay $h > 0$, if there exist matrices $P_N \in \mathbb{S}^{(N+1)n}_ +$, $S, R \in \mathbb{S}^n_ +$ such that the following equation
	\begin{equation} \label{eq:LMI1}
		\forall \xi \in \mathbb{R}^{n(N+3)} \text{ s.t. } M \xi = 0, \quad \xi^T \Phi_N \xi < 0, \\
	\end{equation}
	holds with
	\vspace{-0.5cm}
	\[
		\!\begin{array}{l}
			\Phi_N = \Phi_{N0} - \left[ \begin{smallmatrix} \Gamma_N(0) \\ \vdots \\ \Gamma_N(N) \end{smallmatrix} \right]^{\top} R_N \left[ \begin{smallmatrix} \Gamma_N(0) \\ \vdots \\ \Gamma_N(N) \end{smallmatrix} \right], \\
			\Phi_{N0} = \text{He} \left( G_N^{\top} P_N H_N \right) + \tilde{S}_N + h^2 F_N^{\top} R F_N, \\
			\tilde{S}_N = \text{diag}\left(0_n, S, -S, 0_{Nn} \right), \\
			 R_N = \text{diag} \left( R, 3R, \cdots, (2N+1)R \right), \\
			 F_N = \left[ \begin{matrix} I_n & 0_n & 0_n & 0_{n, nN} \end{matrix} \right], \\
			 G_N = \left[ \begin{matrix} 0_n & I_n & 0_n & 0_{n,nN} \\ 0_{nN, n} & 0_{nN, n} & 0_{nN, n} & h I_{nN} \end{matrix} \right], \\
			 H_N = \left[ \begin{matrix} F_N^{\top} & \Gamma_N^{\top}(0) & \cdots & \Gamma_N^{\top}(N-1) \end{matrix} \right]^{\top}, \\
			 \Gamma_N(k) = \left[ \begin{matrix} 0_n & I_n & (-1)^{k+1} I_n & \gamma_{Nk}^0 I_n & \cdots & \gamma_{Nk}^{N-1} I_n \end{matrix} \right], \\
			 \gamma_{Nk}^i = \left\{ \begin{array}{ll} -(2i+1) \left( 1 - (-1)^{k+i} \right), & \quad \text{ if } i \infeq k, \\ 0, & \quad \text{ else,} \end{array} \right.\\
			 M = \left[ \begin{matrix} I_n & -A & -A_d & 0_{n, nN} \end{matrix} \right], \\
		\end{array}
	\]
	then system \eqref{eq:modifiedProblem} is asymptotically stable.
\end{theo}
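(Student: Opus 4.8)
The plan is to prove asymptotic stability of \eqref{eq:modifiedProblem} via the Lyapunov--Krasovskii approach augmented with Bessel--Legendre inequalities, exactly along the lines of Theorem 5 in \cite{seuret:hal-01065142}. First I would introduce the candidate functional
\[
	V_N(x_t) = \tilde{x}_N^{\top} P_N \tilde{x}_N + \int_{t-h}^t x^{\top}(s) S x(s)\, ds + h \int_{-h}^0 \int_{t+\theta}^t \dot{x}^{\top}(s) R \dot{x}(s)\, ds\, d\theta,
\]
where $\tilde{x}_N = \big[ x^{\top}(t) \ \ \tfrac1h\big(\int_{t-h}^t \mathcal{L}_0\, ds\big)^{\top} \ \cdots \ \tfrac1h\big(\int_{t-h}^t \mathcal{L}_{N-1}\, ds\big)^{\top}\big]^{\top}$ stacks $x(t)$ together with the first $N$ Legendre coefficients of $x$ on $[t-h,t]$, the $\mathcal{L}_k$ being the shifted Legendre polynomials on $[t-h,t]$. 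The conditions $P_N \succ 0$, $S \succ 0$, $R \succ 0$ guarantee that $V_N$ is bounded below by a positive quantity and above by the norm of $x_t$, so it is a legitimate LK functional; establishing the lower bound $V_N(x_t) \ge \varepsilon |x(t)|^2$ is routine since the first block of $P_N$ dominates once the other coefficients are viewed as bounded linear functionals of the history.

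The core computation is to differentiate $V_N$ along the trajectories of \eqref{eq:modifiedProblem}. Writing $\xi = \big[ x^{\top}(t)\ \ x^{\top}(t-h)\ \ \dot{x}^{\top}(t)\ \ \tfrac1h\big(\int \mathcal{L}_0\big)^{\top} \cdots\big]^{\top} \in \mathbb{R}^{n(N+3)}$ (so that $M\xi = 0$ encodes precisely $\dot{x}(t) = Ax(t) + A_d x(t-h)$), the term $\mathrm{He}(G_N^{\top} P_N H_N)$ comes from $\tfrac{d}{dt}(\tilde{x}_N^{\top} P_N \tilde{x}_N)$ — here $G_N$ selects $\dot{x}(t)$ and the $\dot{\mathcal{L}}$-coefficients, which by integration by parts on the Legendre polynomials reduce to the finite combinations encoded in the $\Gamma_N(k)$ rows of $H_N$; the term $\tilde{S}_N$ is $x^{\top}(t)Sx(t) - x^{\top}(t-h)Sx(t-h)$; and $h^2 F_N^{\top} R F_N$ is the $h^2 \dot{x}^{\top}(t) R \dot{x}(t)$ contribution. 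The remaining negative term $-h\int_{t-h}^t \dot{x}^{\top}(s)R\dot{x}(s)\,ds$ is where the Bessel--Legendre inequality enters: projecting $\dot{x}$ onto the first $N+1$ Legendre polynomials gives
\[
	h\int_{t-h}^t \dot{x}^{\top}(s) R \dot{x}(s)\, ds \ \supeq\ \sum_{k=0}^{N} (2k+1)\, \Omega_k^{\top} R\, \Omega_k,
\]
with $\Omega_k$ the $k$-th Legendre coefficient of $\dot{x}$, which after integration by parts is exactly $\Gamma_N(k)\xi$; stacking these yields the quadratic form $\xi^{\top}\big[\Gamma_N(0);\dots;\Gamma_N(N)\big]^{\top} R_N \big[\Gamma_N(0);\dots;\Gamma_N(N)\big]\xi$ subtracted in $\Phi_N$. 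Collecting everything, $\dot{V}_N(x_t) \le \xi^{\top}\Phi_N\xi$ whenever $M\xi = 0$.

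Hence \eqref{eq:LMI1} gives $\dot{V}_N(x_t) < 0$ for all nonzero trajectories, and a standard argument (the LMI being strict, one can absorb an $\varepsilon|x(t)|^2$ margin) yields $\dot{V}_N \le -\varepsilon|x(t)|^2$, so by the Lyapunov--Krasovskii theorem (e.g. Theorem 2.1 in \cite{kolmanovskii}) system \eqref{eq:modifiedProblem} is asymptotically stable. The step I expect to be the main obstacle is the bookkeeping in differentiating $\tilde{x}_N^{\top}P_N\tilde{x}_N$: one must use the recurrence/derivative properties of the Legendre polynomials to show that $\tfrac{d}{dt}\big(\tfrac1h\int_{t-h}^t \mathcal{L}_k(s)x(s)\,ds\big)$ is a \emph{finite} linear combination of $x(t)$, $x(t-h)$, and the lower-order coefficients — precisely the content of the matrices $\Gamma_N(k)$ and the coefficients $\gamma_{Nk}^i$ — and to verify that the ``truncation'' in $H_N$ (keeping $\Gamma_N(0),\dots,\Gamma_N(N-1)$) matches the dynamics of $\tilde{x}_N$. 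Everything else is assembling blocks and applying the Bessel inequality.
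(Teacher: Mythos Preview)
Your overall strategy is sound and is essentially the content of Theorem~5 in \cite{seuret:hal-01065142}, but the paper takes a much shorter route: it treats that theorem as a black box giving an unconstrained LMI $\tilde{\Phi}_N\prec 0$ in the reduced vector $\tilde{\xi}=[x^{\top}(t),\,x^{\top}(t-h),\,\tfrac1h\Omega_0^{\top},\dots]^{\top}$, then observes that with $M^{\perp}=\left[\begin{smallmatrix} A & A_d & 0\\ & I_{(N+2)n} & \end{smallmatrix}\right]$ one has $\Phi_N=(M^{\perp})^{\top}\tilde{\Phi}_N M^{\perp}$ and $MM^{\perp}=0$, so the equivalence $(1)\Leftrightarrow(3)$ of the Elimination Lemma (Lemma~\ref{sec:finsler}) makes \eqref{eq:LMI1} equivalent to $\tilde{\Phi}_N\prec 0$. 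In other words, the paper does not redo any Lyapunov--Krasovskii computation; the only new ingredient is the Finsler-type reformulation that adjoins $\dot{x}(t)$ to the state and pushes the dynamics into the constraint $M\xi=0$. Your plan reproduces the entire LK/Bessel derivation instead, which is perfectly valid but considerably longer; the payoff of the paper's reduction is that all the delicate Legendre bookkeeping you flag as the ``main obstacle'' is outsourced to the cited reference.

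One concrete slip to fix: your ordering of $\xi$ is inconsistent with the matrices in the statement. With $M=[I_n\ -A\ -A_d\ 0]$, $F_N=[I_n\ 0\ 0\ 0]$ and $\tilde{S}_N=\mathrm{diag}(0_n,S,-S,0)$, the augmented vector must be $\xi=[\dot{x}^{\top}(t),\,x^{\top}(t),\,x^{\top}(t-h),\,\tfrac1h\Omega_0^{\top},\dots]^{\top}$, not the order you wrote; in particular $G_N\xi$ recovers $\tilde{x}_N$ (not $\dot{x}$ as you say) and $H_N\xi$ recovers $\dot{\tilde{x}}_N$. With the wrong ordering neither $M\xi=0$ nor the identification of $\mathrm{He}(G_N^{\top}P_NH_N)$ with $\tfrac{d}{dt}(\tilde{x}_N^{\top}P_N\tilde{x}_N)$ would go through, so this needs to be corrected before the block-by-block verification you outline can succeed.
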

 \begin{proof} This result mostly comes from the proof of Theorem 5 in \cite{seuret:hal-01065142}, the LMI negativity condition from this theorem is denoted $\tilde{\Phi}_N \prec 0$. The positivity condition has been replaced by the more simple but slightly more conservative one $P_N \succ 0$. Let $M^{\perp}$ defined as follows: $M^{\perp} = \left[ \begin{smallmatrix} A & A_d & 0_n \\ & I_{(N+2)n} & \end{smallmatrix} \right]$.
We notice that $\Phi_N = {M^{\perp}}^{\top} \tilde{\Phi}_N M^{\perp}$ and $M M^{\perp} = 0_{n, 3n}$. Applying the equivalence between Proposition (3) and (1) of Lemma \ref{sec:finsler} ensures the equivalence between the two LMIs. Then system \eqref{eq:modifiedProblem} is indeed asymptotically stable. 
 \end{proof}

\begin{remark} For the proof of Theorem 5 in \cite{seuret:hal-01065142}, the extended state used is:
\[
	\tilde{\xi}( x_t ) = \left[  \begin{matrix} x^{\top}_t (0) & x_t^{\top}(-h) & \frac{1}{h} \Omega_0^{\top} & \cdots & \frac{1}{h} \Omega_{N-1}^{\top} \end{matrix} \right]^{\top},
\]
where $\Omega_i$ is the $i^{\text{th}}$ projection of $x_t$ over a basis of Legendre polynomials on $[-h, 0]$. The Legendre polynomials are not used in the sequel. The extended state $\xi$ in this revised version is as follows:
\[
	\xi( x_t, \dot{x}_t )\! =\! \left[  \begin{matrix} \dot{x}_t^{\top}(0) &\!\! x^{\top}_t (0) &\!\! x_t^{\top}(-h) &\!\! \frac{1}{h} \Omega_0^{\top} & \!\!\dots\! &\! \frac{1}{h} \Omega_{N-1}^{\top} \end{matrix} \right]^{\top}\!\!\!\!\!.
\]
That leads to $\xi = M^{\perp} \tilde{\xi}$ and $M\xi = 0$. 
\end{remark}
\begin{remark} This manipulation can also be interpreted for limited values of $N$ to weighted method of \cite{7045593}.
\end{remark}
These LMI conditions are very effective as for $N = 1$ this is related to Wirtinger-based inequality from \cite{wirtinger}. For $N \supeq 2$, this theorem encompasses Wirtinger's inequality. The conservatism induced by the choice of the Lyapunov functional and Bessel's inequality is reduced as $N$ increases. This makes a very efficient tool for stability analysis. Applying Theorem \ref{sec:theo} to system \eqref{eq:problem} can be performed considering $A_d = BKC$. Using the equivalence between propositions (1) and (2) of Lemma \ref{sec:finsler} on equation \eqref{eq:LMI1} leads to:
\begin{equation}  \label{eq:LMImodified}
	\exists Y \in \mathbb{R}^{n \times n(N+3)}, \quad \quad \Phi_N + \text{He} \left( M^{\top} Y \right) \prec 0,
\end{equation}
with $M = \left[ \begin{matrix} I_n & \ -A & \ -BKC & \ 0_{n,nN} \end{matrix} \right]$. This is still not an LMI but the $K$ matrix is only present in the second term of the inequality, making the synthesis easier as it is possible to see in the next section. 

\section{Variations on the Elimination Lemma}

Using the matrix inequalities obtained in Theorem \ref{sec:theo} with an unknown gain $K$ is not a linear problem. This results in numerically non-tractable inequalities. The following lemma is therefore introduced in order to linearize the problem.

\begin{lemma} \label{sec:finsler2}
	For any $Q \in \mathbb{S}^{n \mathcal N}$, $M \in \mathbb{R}^{n \times n \mathcal N}$, $\mathcal{I} \in \mathbb{R}^{n \times n}$ non singular and  $(\varepsilon_1,  \dots, \varepsilon_{\mathcal N}) \in \mathbb{R}^{\mathcal N}$, the following holds:
		
		If there exists $Z \in \mathbb{R}^{n \times n}$ such that 
			\[
				\begin{array}{c}
					Q + \text{He} \left( M^{\top} Z F_{\varepsilon} \right) \prec 0, \\
					\text{with } F_{\varepsilon} = \left[ \begin{matrix} \varepsilon_1 \mathcal{I} & \varepsilon_2 \mathcal{I} & \cdots & \varepsilon_{\mathcal N} \mathcal{I} \end{matrix} \right];
				\end{array}
			\] 
			
		Then $x^{\top} Q x < 0$ for all $x \in \ker(M)$.
\end{lemma}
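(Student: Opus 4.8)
The plan is to reduce the hypothesis of this lemma to statement (2) of the Elimination Lemma (Lemma~\ref{sec:finsler}), which is already known to imply statement (1), namely $x^\top Q x < 0$ for all $x \in \ker(M)$. Observe that the matrix $F_\varepsilon = \left[ \begin{matrix} \varepsilon_1 \mathcal{I} & \cdots & \varepsilon_{\mathcal N}\mathcal{I} \end{matrix} \right]$ lies in $\mathbb{R}^{n \times n\mathcal N}$, so the product $Z F_\varepsilon$ with $Z \in \mathbb{R}^{n\times n}$ is an element of $\mathbb{R}^{n \times n\mathcal N}$, exactly the shape required of the free matrix $Y$ in statement (2) of Lemma~\ref{sec:finsler}. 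Hence, setting $Y = Z F_\varepsilon$, the assumed inequality $Q + \text{He}\!\left(M^\top Z F_\varepsilon\right) \prec 0$ is literally $Q + \text{He}\!\left(M^\top Y\right) \prec 0$, which is statement (2). Invoking the equivalence (2) $\Rightarrow$ (1) of Lemma~\ref{sec:finsler} then gives the conclusion directly.

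First I would make the dimensional bookkeeping explicit: $\mathcal{I}$ non-singular is not actually needed for this direction of the implication (it will matter later, presumably when one wants a converse or wants to parametrize $Z$), so I would simply note that the hypothesis provides a particular $Y$ of the structured form $ZF_\varepsilon$. Second, I would write the chain $Q + \text{He}(M^\top Y) = Q + \text{He}(M^\top Z F_\varepsilon) \prec 0$ and cite the implication from Lemma~\ref{sec:finsler}. That is the entire argument; the lemma is essentially the remark, already flagged in the proof of Lemma~\ref{sec:finsler}, that it suffices to search for $Y$ inside a restricted subspace — here the subspace $\{Z F_\varepsilon : Z \in \mathbb{R}^{n\times n}\}$ — at the price of (possibly) losing the converse.

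There is no real obstacle here, since this is a one-way implication obtained by restricting the search space for the slack variable $Y$. The only point requiring a sentence of care is the observation that this restriction is genuinely a loss of generality: the lemma deliberately does \emph{not} claim equivalence, because not every admissible $Y$ factors as $Z F_\varepsilon$ for a square $Z$ and a \emph{fixed} tuple $(\varepsilon_1,\dots,\varepsilon_{\mathcal N})$. Thus the honest statement of the proof is: choose $Y = Z F_\varepsilon$, apply Lemma~\ref{sec:finsler}, done — and then remark that the role of the $\varepsilon_i$ (and of the freedom to later optimize over them) is precisely to mitigate the conservatism introduced by this structural constraint.

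\begin{proof}
	Suppose there exists $Z \in \mathbb{R}^{n \times n}$ such that $Q + \text{He}\!\left( M^{\top} Z F_{\varepsilon} \right) \prec 0$, with $F_{\varepsilon} = \left[ \begin{matrix} \varepsilon_1 \mathcal{I} & \varepsilon_2 \mathcal{I} & \cdots & \varepsilon_{\mathcal N} \mathcal{I} \end{matrix} \right]$. Since $F_{\varepsilon} \in \mathbb{R}^{n \times n \mathcal N}$ and $Z \in \mathbb{R}^{n \times n}$, the matrix $Y := Z F_{\varepsilon}$ belongs to $\mathbb{R}^{n \times n \mathcal N}$, which is exactly the space in which the slack variable of statement~(2) of Lemma~\ref{sec:finsler} is sought. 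With this choice, the assumed inequality reads $Q + \text{He}\!\left( M^{\top} Y \right) \prec 0$, that is, statement~(2) of Lemma~\ref{sec:finsler} holds. By the implication (2) $\Rightarrow$ (1) of that lemma, $x^{\top} Q x < 0$ for all $x \in \ker(M)$, which is the desired conclusion. Note that, unlike Lemma~\ref{sec:finsler}, the converse is not claimed: here $Y$ is restricted to the structured subspace $\{ Z F_{\varepsilon} : Z \in \mathbb{R}^{n \times n} \}$ for a fixed tuple $(\varepsilon_1, \dots, \varepsilon_{\mathcal N})$, which introduces some conservatism; the parameters $\varepsilon_i$ are kept free precisely so as to reduce it. The non-singularity of $\mathcal{I}$ is not needed for this implication.
\end{proof}
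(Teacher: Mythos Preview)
Your proof is correct and follows essentially the same approach as the paper: set $Y := Z F_{\varepsilon}$ and invoke the implication (2) $\Rightarrow$ (1) of Lemma~\ref{sec:finsler}. The paper's own proof frames this in terms of restricting the slack variable $Y$ to the subspace $\mathcal{U}_{\varepsilon} = \{ Z F_{\varepsilon} : Z \in \mathbb{R}^{n\times n}\}$ and notes the resulting conservatism, exactly as you do; your remark that non-singularity of $\mathcal{I}$ is not needed for this direction is also accurate.
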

\begin{proof}The proof of Lemma \ref{sec:finsler} shows that there exists a $Y$ which belongs to a subspace $\mathcal{U}$ of $\mathbb{R}^{nN \times n}$ with $\text{Span}(M) \subset \mathcal{U}$. Here, we assume $Y$ to be in the set $\mathcal{U}_{\varepsilon} = \left\{ Z F_{\varepsilon} \ | \ Z \in \mathbb{R}^{n \times n} \right\}$. As there is no guaranty that $\mathcal{U} \cap \mathcal{U}_{\varepsilon} \neq \left\{ 0 \right\}$, that leads to this more conservative lemma. For synthesis purposes detailed later on, assume that $Z$ defined previously and $\mathcal{I}$ commutes. As $Z \in \mathbb{R}^{n \times n}$ is unknown, $\mathcal{I}$ must commute with all the matrices, and then, $\mathcal{I} = I_n$ is required.\end{proof}


From now on, the most difficult part is to find values for $\varepsilon_i$ such that the result is the ``least conservative''. The idea developed here is to minimize the distance between the subspace $\mathcal{U}$ and $\mathcal{U_{\varepsilon}}$. Then, as there exists $Y \in \mathcal{U}$ such that equation $Q + \text{He} \left( M^{\top} Y \right) \prec 0$ is satisfied, there surely exists a neighborhood around $Y = \frac{\sigma}{2}M$ in which this is still true. If the two subspaces $\mathcal{U}$ and $\mathcal{U}_{\varepsilon}$ are close enough, the intersection between this neighborhood and $\mathcal{U}_{\varepsilon}$ may not be empty.
To do so, a distance between the two subspaces needs to be introduced to correctly set up the optimization process.

\subsection{Optimization over $\varepsilon_1, \dots \varepsilon_N$}

In this subsection, we consider without loss of generality that $M$ has the following structure:
\[
	M = \left[ \begin{matrix} A_1 & A_2 & \cdots & A_{\mathcal{N}} \end{matrix} \right],
\]
with $A_1, \cdots, A_{\mathcal N} \in \mathbb{R}^{n \times n}$.
The Frobenius norm on the space of matrices is defined as follow: 
\[
	\forall U \in \mathbb{R}^{n \mathcal{N} \times n}, \quad \| U \|_F^2 = \tr \left( U U^{\top} \right).
\]
The problem is then to find the values $\varepsilon_i$ such that $\| M - F_{\varepsilon} \|_F^2$ is minimized: 
\[
	\varepsilon^{opt} = \left( \varepsilon^{opt}_1, \varepsilon^{opt}_2, \cdots, \varepsilon^{opt}_{\mathcal N} \right) = \Argmin_{\varepsilon_1, \dots, \varepsilon_{\mathcal N} \in \mathbb{R}} \left(\| M - F_{\varepsilon}\|_F^2 \right).
\]
Using properties of the trace operator, one gets:
	\[
		\begin{array}{ll}
			\varepsilon^{opt} \!\!\!\!& \displaystyle   = \Argmin_{\varepsilon_1, \dots, \varepsilon_{\mathcal N} \in \mathbb{R}} \tr \left( - \sum_{i=1}^{\mathcal N} \varepsilon_i \left( \mathcal{I} A_i^{\top} + A_i \mathcal{I}^{\top} \right) + \varepsilon_i^2 \mathcal{I} \mathcal{I}^{\top} \right) \\
			& \displaystyle = \Argmin_{\varepsilon_1, \dots, \varepsilon_{\mathcal N} \in \mathbb{R}} \sum_{i=1}^{\mathcal N} \underbrace{\left( -  2 \varepsilon_i \tr \left( \mathcal{I} A_i^{\top}\right) + \varepsilon_i^2 \tr \left( \mathcal{I} \mathcal{I}^{\top} \right) \right)}_{f_i(\varepsilon_i)} \\
			& \displaystyle = \Argmin_{\varepsilon_1, \dots, \varepsilon_{\mathcal N} \in \mathbb{R}} g(\varepsilon).
		\end{array}
	\]
First, notice that each $f_i$ is convex so $g$ is convex. To find the argument for the minimum, a first idea is to compute the Hessian of $g$: $\nabla^2g = \tr(\mathcal{I} \mathcal{I}^{\top}) I_{\mathcal N}$. As $\mathcal{I}$ is invertible, $\tr(\mathcal{I} \mathcal{I}^{\top}) > 0$, so $\nabla^2g$ is definite positive and it has a global minimum. To find the argument for the minimum, we need to solve $\nabla g (\varepsilon^{opt}) = 0$, or in other words: $-2 \tr \left( \mathcal{I} A_i^{\top}\right)  + 2 \varepsilon^{opt}_i = 0$ for all $i \in [1, \mathcal N]$. That leads to the following:
\[
	\forall i \in [1, \mathcal N], \quad \varepsilon_i^{opt} = \frac{\tr(\mathcal{I} A_i^{\top})}{\tr(\mathcal{I} \mathcal{I}^{\top})}.
\]

\begin{remark} It is possible to adapt this procedure to another norm and find other optimal values; for example, the induced norm on a matrix $U \in \mathbb{R}^{n \mathcal{N} \times n}, \| U \|_{2}^2 =  \sup_{ \substack{x \in \mathbb{R}^{nN} \\ x \neq 0} } \frac{\| Ux \|^2}{\|x\|^2}$. 
This optimal condition is nevertheless not tractable.
\end{remark}
Then, based on Lemma \ref{sec:finsler2}, we get the following result:
\begin{lemma} \label{sec:finsler3}
	For any $Q \in \mathbb{S}^{n \mathcal N}$, $M \in \mathbb{R}^{n \times n \mathcal N}$, $\mathcal{I} \in \mathbb{R}^{n \times n}$ non singular, the following implication holds: 	
	
	If there exists $Z \in \mathbb{R}^{n \times n}$ such that 
		\[
			\begin{array}{c}
				Q + \text{He} \left( M^{\top} Z F_Z \right) \prec 0,\\
				\text{with } F_Z = \left[ \begin{matrix} \tr(\mathcal{I} A_1^{\top}) \mathcal{I} & \cdots  & \tr(\mathcal{I} A_{\mathcal N}^{\top}) \mathcal{I} \end{matrix} \right];
			\end{array}
		\]
		
		Then $x^{\top} Q x < 0$ for all $x \in \ker(M)$.
\end{lemma}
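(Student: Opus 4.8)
The plan is to derive Lemma~\ref{sec:finsler3} directly from Lemma~\ref{sec:finsler2} by instantiating the free parameters $\varepsilon_1,\dots,\varepsilon_{\mathcal{N}}$ at the optimal values $\varepsilon_i^{opt} = \tr(\mathcal{I}A_i^{\top})/\tr(\mathcal{I}\mathcal{I}^{\top})$ computed just above. First I would note that, because $\mathcal{I}$ is nonsingular, $\tr(\mathcal{I}\mathcal{I}^{\top}) = \|\mathcal{I}\|_F^2 > 0$, so these weights are well defined, and that evaluating the row matrix $F_{\varepsilon}$ from Lemma~\ref{sec:finsler2} at $\varepsilon = \varepsilon^{opt}$ gives
\[
	F_{\varepsilon^{opt}} = \left[ \begin{matrix} \varepsilon_1^{opt}\,\mathcal{I} & \cdots & \varepsilon_{\mathcal{N}}^{opt}\,\mathcal{I} \end{matrix} \right] = \frac{1}{\tr(\mathcal{I}\mathcal{I}^{\top})}\,F_Z ,
\]
that is, $F_Z$ is merely a strictly positive rescaling of $F_{\varepsilon^{opt}}$.

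The second step is a rescaling of the slack variable. Assume $Z \in \mathbb{R}^{n\times n}$ satisfies $Q + \text{He}(M^{\top} Z F_Z) \prec 0$. Setting $Z^{\star} = \tr(\mathcal{I}\mathcal{I}^{\top})\,Z$, which again lies in $\mathbb{R}^{n\times n}$, one has $M^{\top} Z F_Z = M^{\top} Z^{\star} F_{\varepsilon^{opt}}$, hence $Q + \text{He}(M^{\top} Z^{\star} F_{\varepsilon^{opt}}) \prec 0$. Applying Lemma~\ref{sec:finsler2} with the tuple $(\varepsilon_1^{opt},\dots,\varepsilon_{\mathcal{N}}^{opt})$ and the slack matrix $Z^{\star}$ then gives $x^{\top} Q x < 0$ for all $x \in \ker(M)$, which is exactly the claim.

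I do not expect a real obstacle: the only care needed is technical, namely that $\tr(\mathcal{I}\mathcal{I}^{\top}) \neq 0$ (ensured by nonsingularity of $\mathcal{I}$) and that scaling a slack variable by a positive constant changes neither its admissibility nor the strictness of the LMI. All the substance — that the Finsler multiplier $Y$ may legitimately be restricted to the family $\mathcal{U}_{\varepsilon}$, and that $\varepsilon^{opt}$ minimises the Frobenius gap $\|M - F_{\varepsilon}\|_F$ — has already been settled in Lemma~\ref{sec:finsler2} and in the optimisation discussion preceding this statement; the present argument just records that the tractable surrogate $F_Z$ is, up to that harmless positive scalar, exactly the minimiser found there.
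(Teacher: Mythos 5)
Your proof is correct and matches the paper's (implicit) argument: the paper gives no separate proof and simply presents the lemma as Lemma~\ref{sec:finsler2} evaluated at the optimal weights, which is exactly your instantiation-plus-rescaling step. As a minor simplification, note that Lemma~\ref{sec:finsler2} holds for \emph{arbitrary} reals $\varepsilon_i$, so you could take $\varepsilon_i = \tr(\mathcal{I}A_i^{\top})$ directly, making $F_Z = F_{\varepsilon}$ and rendering the rescaling of $Z$ (and the positivity of $\tr(\mathcal{I}\mathcal{I}^{\top})$) unnecessary for the logic, though it is the right way to connect $F_Z$ to the minimiser $\varepsilon^{opt}$.
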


The optimization made earlier means that there is a $Z \in \mathbb{R}^{n \times n}$ such that $\tr(A_i) Z$ is as close as possible to $A_i$ and it must be true for all $i \in [1, \mathcal{N}]$. If the eigenvalues of a matrix $A_i$ are ``far'' from each-other, then this method is not efficient and the conservatism grows as $n$ and $\mathcal N$ increase.\\
Indeed, if $\tr(A_1) = 0$ for example, following our methodology, the distance between $\varepsilon_1 I_n$ and $A_1$ is minimized for $\varepsilon_1 = 0$ even if $A_1$ is far from being the null matrix. To correct this phenomenon, we propose an alternative methodology based on the same optimization process.

\subsection{Towards a structured $\mathcal{I}$}
\label{sec:jordan}
The idea is to give a particular structure to $\mathcal{I}$ and $Z$. As we would like to perfectly fit between $A_{\ell}$ and $\mathcal{I}Z$ for a given $\ell \in [1, \mathcal{N}]$, one solution is to deal with each eigenvalue of a matrix $A_{\ell}$ with a different $\varepsilon$. And to do so, the matrix $A_{\ell}$ can be transformed into its real Jordan form. Then one gets:
\begin{equation} \label{eq:jordanForm}
	\begin{array}{l}
		\tilde{A}_i = T^{-1} A_i T, \\
		\tilde{A}_{\ell} = \text{diag} \left( \tilde{A}_{\ell}(1), \dots, \tilde{A}_{\ell}(k) \right), \\
		\forall j \in [1, k], \quad J_j \tilde{A}_{\ell} J_j^{\top} = \tilde{A}_{\ell}(j), \\
	\end{array}
\end{equation}
where $T$ is a non singular matrix such that $\tilde{A}_{\ell}$ is the Jordan normal form of $A_{\ell}$, that means $\tilde{A}_{\ell}(j)$ is a real Jordan block and there are $k$ different Jordan blocks. If we denote by $r_j$ for $j \in [1, k]$ the algebraic multiplicity of eigenvalue $j$ of $A_{\ell}$, then $J_j$ is a $r_j$ by $n$ matrix of the form $J_j = \left[ \begin{matrix} 0 & I_{r_j} & 0 \end{matrix} \right]$.\\
Instead of the methodology developed above, other slack variables are defined:
\begin{equation} \label{eq:notation}
	\begin{array}{l}
		F_W = \left[ \begin{matrix} \mathcal{E}_1 & \dots  & \mathcal{E}_N \end{matrix} \right], \quad W = \sum_{j=1}^k  J_j^{\top} W_j J_j, \\
		\forall i \in [1, \mathcal N], \quad \mathcal{E}_i = \sum_{j=1}^k \varepsilon_i(j) J_j^{\top} J_j,\\
	\end{array}
\end{equation}
with $W_j$ a real matrix of same size than $\tilde{A}_j$. Then, to each eigenvalue $\lambda_i$ of $A_{\ell}$, we associate $\varepsilon_{\ell}(i)$.

\begin{remark} Note that $\mathcal{E}_i$ and $W$ commute.\end{remark}

To be clearer with these notations, here is an example.
\begin{example} Assume here $\ell = 1$, $\mathcal N = 2$ and consider:\\
\[
	A_1 = \left[\begin{smallmatrix} 5 & 3 & 3 & 2 \\ 0 & 2 & -2 & -2 \\ -1 & -1 & 3 & 0 \\ 1 & 1 & 1 & 4 \end{smallmatrix} \right] \text{ and } A_2 = \text{diag}\left(I_2, 3 I_2 \right).
\]
We then get $\tilde{A}_1 = \text{diag}\left( \left[ \begin{smallmatrix} 4 & 1 \\ 0 & 4 \end{smallmatrix} \right], 4, 2 \right)$ and $\tilde{A}_2 = \text{diag} \left(  \left[ \begin{smallmatrix} 3 & 0 \\ -2 & 1 \end{smallmatrix} \right],  \left[ \begin{smallmatrix} 3 & 0 \\ 2 & 1 \end{smallmatrix} \right] \right)$. As there are two different eigenvalues in $A_1$, $k=2$ and $r_1 = 2$, $r_2 = 1$.
\end{example}

An adaptation of Lemma \ref{sec:finsler2} can then be derived:
\begin{lemma} \label{sec:finsler4}
	For any $Q \in \mathbb{S}^{n \mathcal N}$, $M \in \mathbb{R}^{n \times n \mathcal N}$ and \\ $(\varepsilon_1(1), \dots, \varepsilon_1(k),  \dots, \varepsilon_{\mathcal N}(k)) \in \mathbb{R}^{k \mathcal{N}}$, the following implication holds: 	
	
	If there exists $W_j \in \mathbb{R}^{r_j \times r_j}$ such that for $F_W$ and $W$ defined in Equation \eqref{eq:notation}:
		\[
			\begin{array}{c}
				Q + \text{He} \left( \tilde{M}^{\top} W F_W \right) \prec 0,\\
				\text{with } \tilde{M} = \left[\begin{matrix} \tilde{A}_1 & \cdots & \tilde{A}_{\mathcal N} \end{matrix} \right];
			\end{array}
		\]
	
	Then $x^{\top} Q x < 0$ for all $x \in \ker(\tilde{M})$.
\end{lemma}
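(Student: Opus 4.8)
The plan is to reduce Lemma \ref{sec:finsler4} to Lemma \ref{sec:finsler2} by exploiting the commutation structure built into the new slack variables. First I would observe that the matrices $\mathcal{E}_i = \sum_{j=1}^k \varepsilon_i(j) J_j^\top J_j$ and $W = \sum_{j=1}^k J_j^\top W_j J_j$ are all \emph{block-diagonal} with respect to the partition of $\mathbb{R}^n$ induced by the Jordan blocks $J_1,\dots,J_k$, because the $J_j^\top J_j$ are mutually orthogonal projectors summing to $I_n$. In particular $\mathcal{E}_i$ and $W$ commute (as noted in the Remark just before the example), and $W F_W = W [\mathcal{E}_1 \ \cdots \ \mathcal{E}_{\mathcal N}] = [\mathcal{E}_1 W \ \cdots \ \mathcal{E}_{\mathcal N} W]$. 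So the term $\text{He}(\tilde M^\top W F_W)$ has the same role as $\text{He}(M^\top Z F_\varepsilon)$ in Lemma \ref{sec:finsler2}, except that the single scalar-times-$\mathcal{I}$ blocks are replaced by the finer block-diagonal matrices $\mathcal{E}_i$ and the single slack $Z$ is replaced by $W$.

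Next I would apply the argument of Lemma \ref{sec:finsler2} directly. Suppose $W_1,\dots,W_k$ exist with $Q + \text{He}(\tilde M^\top W F_W) \prec 0$. By Lemma \ref{sec:finsler} (equivalence of (1) and (2), applied with $\tilde M$ in place of $M$ and $Y = W F_W$), this implies $x^\top Q x < 0$ for every $x \in \ker(\tilde M)$. Indeed, for any such $x$ one has $\tilde M x = 0$, hence $x^\top \text{He}(\tilde M^\top W F_W) x = 2\, x^\top \tilde M^\top (W F_W)\, x^\top$\!\!---\,more carefully, $x^\top \tilde M^\top W F_W x + x^\top F_W^\top W^\top \tilde M x = 0$ since $\tilde M x = 0$ kills both terms\,---\,so $x^\top Q x = x^\top\!\big(Q + \text{He}(\tilde M^\top W F_W)\big)x < 0$. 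This is exactly the desired conclusion, and no appeal to Finsler or to the optimal $\varepsilon$ values is needed for the implication itself; those only motivate the particular choice of $F_W$.

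The one point that needs a word of care is that $W$ must be an admissible ``$Z$'' in the sense of Lemma \ref{sec:finsler2}, i.e. that $\text{He}(\tilde M^\top W F_W)$ is genuinely of the form $\text{He}(\tilde M^\top Z F_{\varepsilon})$ with $Z$ a single $n\times n$ matrix commuting with the structured multiplier. Here $Z = W$ plays that role: because $W$ and each $\mathcal{E}_i$ are simultaneously block-diagonal they commute, so the structural hypothesis from the proof of Lemma \ref{sec:finsler2} (``$Z$ and $\mathcal{I}$ commute'') is satisfied by construction, and the role previously forced onto $\mathcal{I} = I_n$ is now played by the block structure itself. I expect the only genuine obstacle to be purely bookkeeping: checking that the $r_j$-dimensional blocks $W_j$ assemble into an $n\times n$ matrix $W$ whose product with the $\mathcal{E}_i$ respects the block partition, which follows immediately from $J_i J_j^\top = \delta_{ij} I_{r_j}$. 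Everything else is a verbatim transcription of the proof of Lemma \ref{sec:finsler2}, now read in the Jordan coordinates where $M$ has been replaced by $\tilde M = [\,\tilde A_1 \ \cdots \ \tilde A_{\mathcal N}\,]$ and $\ker(\tilde M) = T^{-1}\ker(M)$ after the similarity transformation of \eqref{eq:jordanForm}.
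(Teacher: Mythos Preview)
Your proposal is correct and follows the same approach as the paper. The paper does not give an explicit proof of Lemma~\ref{sec:finsler4}; it is introduced simply as ``an adaptation of Lemma~\ref{sec:finsler2},'' and you have correctly supplied that adaptation by setting $Y = W F_W$ in Lemma~\ref{sec:finsler} and observing that $\tilde{M}x = 0$ kills the $\text{He}$ term. Your remark that the commutation of $W$ and $\mathcal{E}_i$ is not needed for the implication itself, only for the downstream synthesis, is also accurate and a useful clarification beyond what the paper states.
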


The same optimization process than in the previous part, i.e. find $\varepsilon_i(j)$ such that $\|\tilde{M} - F_W\|_F$ is minimized, leads to:
\begin{equation}
	\forall i \in [1, \mathcal N], \forall j \in [1, k], \quad \varepsilon_i^{opt}(j) = \frac{1}{r_j} \tr \left( J_j \tilde{A}_i J_j^{\top} \right).
\end{equation}

With this process, the eigenvalues of $A_{\ell}$ are treated with different $\varepsilon_i(j)$. So it can deal with spread eigenvalues. Then, the accuracy of this optimization will fall as $\mathcal N$ increases but is independent of $n$. Moreover, the number of slack variables is lower than in the previous case because $W$ is structured.

{\it Example 1 (cont.)} Using matrices from equation \eqref{eq:notation} on Example 1 for $W_1 \in \mathbb{R}^{3 \times 3}$ and $W_2 \in \mathbb{R}$, the optimization results in:
	\[
		 \begin{array}{l}
		 	\mathcal{E}_1 = \text{diag}\left( \varepsilon_1(1) I_3, \varepsilon_1(2) \right) = \text{diag}\left( 4 I_3, 2 \right), \\
			\mathcal{E}_2 = \text{diag}\left( \varepsilon_2(1) I_3, \varepsilon_2(2) \right) = \text{diag}\left( \frac{7}{3} I_3, 1 \right),\\
			W = \text{diag}(W_1, W_2).
		\end{array}
	\]

\begin{remark} This methodology is also effective if $A_{\ell}$ is not in Jordan form. This form was chosen to prevent from a spreading effect. If one chooses for example $J_i = I_n$ for all $i$, then we get the result of the previous subpart. It is nevertheless known that the numerical procedure to get $A_{\ell}$ in its Jordan normal form is not always accurate \cite{golub1976ill}. Nevertheless, Finsler's lemma has been widely used to deal with robustness issues and then the problem can be slightly modified to meet robustness properties also \cite{Svariable}. \end{remark}

With this corollary, a new formulation of the elimination lemma is introduced. It is more conservative but the slack variables $F_W$ have been chosen to minimize its conservatism. This is the main novelty of this paper and notice that it can be applied to many other problems than the one discussed in this paper as: stabilization of sampled, event-triggered or delay-varying systems.

\section{Static State and Output Feedback Synthesis}

We focus now on the design of a controller for time-delay system \eqref{eq:problem}. The feedback gain $K$ is a decision variable and the aim of this part is to convert inequality \eqref{eq:LMImodified} into a LMI. Applying the methodology of the previous section, the following corollary is obtained:
\begin{coro} \label{sec:coro}
	For a given integer $N > 0$ and a constant delay $h > 0$, if there exist matrices $P_N \in \mathbb{S}^{(N+1)n}_ +$, $S, R \in \mathbb{S}^n_ +$ such that the following inequality
	\begin{equation} \label{eq:LMI2}
		\Phi_N + \text{He} \left( \tilde{M} ^{\top} W F_W \right) \prec 0
	\end{equation}
	holds for $\Phi_N$ defined in Theorem \ref{sec:theo}, $F_W$ and $W$ defined in \eqref{eq:jordanForm}-\eqref{eq:notation} for $\ell = 2$ and
	\[
		\tilde{M} = \left[ \begin{matrix} I_n & \ -\tilde{A} & \ - T^{-1} B K C T & \ 0_{n, nN} \end{matrix} \right], \\
	\]
	then system \eqref{eq:modifiedProblem} is asymptotically stable and $W$ is not singular.
\end{coro}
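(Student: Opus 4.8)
The plan is to recognize inequality \eqref{eq:LMI2} as the image, under a linear change of state coordinates, of the Lyapunov--Krasovskii condition \eqref{eq:LMI1} of Theorem~\ref{sec:theo}, and then to read the non-singularity of $W$ directly off the first diagonal block of \eqref{eq:LMI2}.

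First I would apply Lemma~\ref{sec:finsler4} with $Q=\Phi_N$ and with the matrix $\tilde M$ of the statement. Since $F_W$ and $W$ have exactly the structure \eqref{eq:notation} required by that lemma, its hypothesis is precisely \eqref{eq:LMI2}, so it yields $\xi^{\top}\Phi_N\xi<0$ for every $\xi\in\ker(\tilde M)$.

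Next I would identify $\tilde M$ with the constraint matrix that Theorem~\ref{sec:theo} attaches to a rotated copy of the closed loop. Let $z(t)=T^{-1}x(t)$, with $T$ the change of basis of \eqref{eq:jordanForm} for $\ell=2$ (it brings $A_2=-A$ to real Jordan form), and put $\tilde A=T^{-1}AT$ and $\tilde A_d=T^{-1}BKCT$. Then \eqref{eq:problem} (i.e.\ \eqref{eq:modifiedProblem} with $A_d=BKC$) reads, in the $z$ variable, $\dot z(t)=\tilde A z(t)+\tilde A_d z(t-h)$, and the matrix Theorem~\ref{sec:theo} associates with this system is exactly $\left[\begin{matrix} I_n & -\tilde A & -\tilde A_d & 0_{n,nN}\end{matrix}\right]=\tilde M$. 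The point that makes this identification legitimate is that the matrix $\Phi_N$ of Theorem~\ref{sec:theo} depends only on the decision variables $P_N,S,R$ and on $h,N$, and not on the system matrices, which enter only through the constraint; hence the inequality ``$\xi^{\top}\Phi_N\xi<0$ on $\ker(\tilde M)$'' just obtained is nothing but condition \eqref{eq:LMI1} of Theorem~\ref{sec:theo} written for the $z$-system, with the same $P_N\succ0$ and $S,R\succ0$. Theorem~\ref{sec:theo} then gives asymptotic stability of the $z$-system, and since $T$ is non-singular, of \eqref{eq:problem} itself. I expect the only genuine care needed to be the bookkeeping of this coordinate change --- in particular the sign convention hidden in $\ell=2$, for which $A_2=-A$ (so $\tilde A=-\tilde A_2$), and the fact that the extended state $\xi(z_t,\dot z_t)$ is the image of $\xi(x_t,\dot x_t)$ under $\text{diag}(T^{-1},\dots,T^{-1})$, so that no spurious relations among the extra states are created.

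Finally, for the non-singularity of $W$ I would inspect the $(1,1)$ block (of size $n$) of \eqref{eq:LMI2}. In $\Phi_N$ the only term reaching that position is $h^{2}F_N^{\top}RF_N$ --- the first block-columns of $G_N$ and of every $\Gamma_N(k)$ are zero and the first diagonal block of $\tilde S_N$ is $0_n$ --- so the $(1,1)$ block of $\Phi_N$ equals $h^{2}R$. In $\text{He}(\tilde M^{\top}WF_W)$ the first block-columns of both $\tilde M$ and $F_W$ are $I_n$: indeed $\tilde A_1=T^{-1}I_nT=I_n$, so $\varepsilon_1^{opt}(j)=\tfrac1{r_j}\tr(J_jJ_j^{\top})=1$ and $\mathcal E_1=I_n$, whence the $(1,1)$ block of $\text{He}(\tilde M^{\top}WF_W)$ is $\text{He}(W)$. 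As a principal submatrix of a negative definite matrix, $h^{2}R+\text{He}(W)\prec0$, and since $R\succ0$ this forces $\text{He}(W)\prec0$; if $Wv=0$ then $v^{\top}\text{He}(W)v=0$, so $v=0$, i.e.\ $W$ is non-singular.
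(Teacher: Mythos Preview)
Your proposal is correct and follows essentially the same route as the paper. The paper's proof is a two-line sketch --- ``stability comes naturally using Lemma~\ref{sec:finsler4}, Theorem~\ref{sec:theo} and equation~\eqref{eq:LMImodified}; $W$ is non-singular because \eqref{eq:LMI2} implies $\text{He}(W)\prec0$'' --- and you have simply filled in the details it omits: the coordinate change $z=T^{-1}x$ that reconciles $\tilde M$ with the constraint matrix of Theorem~\ref{sec:theo}, and the explicit $(1,1)$-block computation (yielding $h^{2}R+\text{He}(W)\prec0$, hence $\text{He}(W)\prec0$) that the paper asserts without justification.
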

\begin{proof} The proof of stability comes naturally using Lemma \ref{sec:finsler4}, Theorem \ref{sec:theo} and equation \eqref{eq:LMImodified}. $W$ is not singular because inequality \eqref{eq:LMI2} implies that $\text{He}\left(W\right) \prec 0$.
\end{proof}

Inequality \eqref{eq:LMI2} is a Bilinear Matrix Inequality (BMI) and is, as noted in \cite{vanantwerp2000tutorial}, difficult to solve. In the following subsections, we will set up a sequence of LMI to solve in order to give a solution to the BMI problem. The SSF and SOF synthesis are then discussed.

\subsection{Static State Feedback synthesis}

Corollary \ref{sec:coro} already transformed the initial non linear problem stated in Theorem \ref{sec:theo} into a more suitable form for the synthesis of $K$. 
The following corollary linearizes Corollary \ref{sec:coro} in the case $C = I_n$.

\begin{coro}
	\label{sec:coroSSF}
	For a given integer $N > 0$ and a constant delay $h > 0$, if there exist matrices $P_N \in \mathbb{S}^{(N+1)n}_ +$, $S, R \in \mathbb{S}^n_ +$ and $\bar{K} \in \mathbb{R}^{m \times n}$ such that the following inequality
	\begin{equation} \label{eq:LMI3}
		\Phi_{N} + \text{He}\left( \! \left( \mathcal{M} \tilde{X} + \left[ \begin{matrix} 0_n & 0_n & -\tilde{B} \bar{K} & 0_{n, nN} \end{matrix} \right] \right)^{\!\!\top} \!\! F_W \! \right) \prec 0
	\end{equation}
	holds with the same notations than in Corollary \ref{sec:coro} but:
	 \[
	 	\begin{array}{l}
	 		\mathcal{M} = \left[ \begin{matrix} I_n & -\tilde{A} & 0_n & 0_{n, nN} \end{matrix} \right], \quad \tilde{X} = \text{diag}(\underbrace{X, \cdots ,X}_{N+3}),\\
	 		X = \sum_{j=1}^k J_j^{\top} X_j J_j, \quad  \forall j \in [1, k], X_j \in \mathbb{R}^{r_j \times r_j}, \\
	 		\tilde{A} = T^{-1} A T, \quad \tilde{B} = T^{-1} B,
		\end{array}
	 \]
then time-delay system \eqref{eq:problem} is asymptotically stable with the state feedback gain $K = \bar{K} X^{-1} T^{-1}$.
\end{coro}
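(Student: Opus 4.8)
The plan is to identify \eqref{eq:LMI3} as the image of the bilinear inequality \eqref{eq:LMI2} of Corollary~\ref{sec:coro} (specialized to $C=I_n$) under a congruence transformation combined with the invertible change of variables $W\mapsto X^{-1}$ and $\bar K\mapsto KTX$, and then to conclude by Corollary~\ref{sec:coro}.

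First I would check that the stated gain is well defined. Consider the $n\times n$ block of the left-hand side of \eqref{eq:LMI3} indexed by the coordinate $\dot x_t(0)$. Among the constituents of $\Phi_N$ only $h^2F_N^\top R F_N$ contributes there, with value $h^2R$, while the slack term contributes $X^\top\mathcal E_1+\mathcal E_1X$; since the optimal choice used in Corollary~\ref{sec:coro} gives $\varepsilon_1(j)=\tfrac1{r_j}\tr\!\big(J_jI_nJ_j^\top\big)=1$, so that $\mathcal E_1=\sum_jJ_j^\top J_j=I_n$, this equals $\text{He}(X)$. Hence \eqref{eq:LMI3} forces $h^2R+\text{He}(X)\prec0$, so $\text{He}(X)\prec0$ and $X$ is nonsingular; being block-diagonal with blocks $X_j$, each $X_j$ is nonsingular, $X^{-1}=\sum_{j=1}^kJ_j^\top X_j^{-1}J_j$ is again of the structured form required of the slack in Corollary~\ref{sec:coro}, and $K=\bar K X^{-1}T^{-1}$ makes sense.

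Next I would set $D:=\tilde X$ (the nonsingular block-diagonal matrix with $N+3$ copies of $X$), write $\tilde X_{N+1}$ for the analogous matrix with $N+1$ copies, and put $W:=X^{-1}$, $\bar K:=KTX$, $\bar P_N:=(\tilde X_{N+1})^{-\top}P_N(\tilde X_{N+1})^{-1}$, $\bar S:=X^{-\top}SX^{-1}$, $\bar R:=X^{-\top}RX^{-1}$, all still positive definite. The core of the argument is a list of ``pull-out'' identities read off from the block structure of the data: $F_ND=XF_N$, $\Gamma_N(k)D=X\Gamma_N(k)$, $G_ND=\tilde X_{N+1}G_N$, $H_ND=\tilde X_{N+1}H_N$, and $F_WD=XF_W$ --- the last because each $\mathcal E_i=\sum_j\varepsilon_i(j)J_j^\top J_j$ is block-scalar and therefore commutes with the block-diagonal $X$. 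Substituting these into the definition of $\Phi_N$ gives $D^\top\Phi_N(\bar P_N,\bar S,\bar R)D=\Phi_N(P_N,S,R)$; moreover, writing $\tilde M=\left[\begin{matrix}I_n&-\tilde A&-T^{-1}BKT&0_{n,nN}\end{matrix}\right]$ and using $WX=I_n$ together with $T^{-1}BKTX=\tilde B\bar K$, one gets $\tilde MD=\mathcal M\tilde X+\left[\begin{matrix}0_n&0_n&-\tilde B\bar K&0_{n,nN}\end{matrix}\right]$ and hence $D^\top\text{He}\!\big(\tilde M^\top WF_W\big)D=\text{He}\!\Big(\big(\mathcal M\tilde X+\left[\begin{matrix}0_n&0_n&-\tilde B\bar K&0_{n,nN}\end{matrix}\right]\big)^{\!\top}F_W\Big)$. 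Adding the two, the right-hand side is exactly the left-hand side of \eqref{eq:LMI3}; since $D$ is nonsingular this shows \eqref{eq:LMI3} is equivalent to $\Phi_N(\bar P_N,\bar S,\bar R)+\text{He}\!\big(\tilde M^\top WF_W\big)\prec0$, i.e. to inequality \eqref{eq:LMI2} of Corollary~\ref{sec:coro} for the triple $(\bar P_N,\bar S,\bar R)$, the structured slack $W=X^{-1}$, and the gain $K=\bar K X^{-1}T^{-1}$ with $C=I_n$.

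It then suffices to invoke Corollary~\ref{sec:coro}: \eqref{eq:LMI2} holds, so the closed loop \eqref{eq:problem} (with $A_d=BK$) is asymptotically stable, i.e. $K=\bar K X^{-1}T^{-1}$ is a stabilizing static state feedback gain. The main obstacle is conceptual: realizing that the right tool is the congruence by $\tilde X$, which at once turns $\Phi_N(P_N,S,R)$ into $\Phi_N$ of congruence-transformed (still positive definite) parameters, absorbs the slack into $W=X^{-1}$, and linearizes the controller gain through $\bar K=KTX$; after that, the only remaining work is the routine but slightly tedious block bookkeeping behind the pull-out identities --- in particular the fact that $F_W$ commutes past $\tilde X$ thanks to the block-scalar structure of the $\mathcal E_i$.
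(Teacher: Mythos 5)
Your proof is correct and follows essentially the same route as the paper: a congruence of inequality \eqref{eq:LMI2} by $\tilde{X}$, the pull-out identities $F_N\tilde{X}=XF_N$, $G_N\tilde{X}=\tilde{X}_{N+1}G_N$, $F_W\tilde{X}=XF_W$, and the change of variables $W=X^{-1}$, $\bar{K}=KTX$, followed by an appeal to Corollary~\ref{sec:coro}. Your version is in fact slightly more careful than the paper's, since you argue the implication in the logically correct direction (from \eqref{eq:LMI3} back to \eqref{eq:LMI2}), justify the nonsingularity of $X$ directly from the $(1,1)$ block of \eqref{eq:LMI3}, and make explicit the renaming of $P_N$, $S$, $R$ under the congruence, all of which the paper leaves implicit.
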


\begin{proof}
	In the proof of Corollary \ref{sec:coro}, $W$ is not singular, $X = W^{-1}$ can be introduced. A direct calculation shows that $X = \sum_{j=1}^k J_j^{\top} W_j^{-1} J_j$. Note that
	\[
		\tilde{M} = \mathcal{M} +  \left[ \begin{matrix} 0_n & 0_n & - \tilde{B} KT & 0_{n, nN} \end{matrix} \right].
	\]
	Multiplying inequality \eqref{eq:LMI2} on the left by $\tilde{X}^{\top}$ and on the right by $\tilde{X}$ and noticing that $F_N \tilde{X} = X F_N$, $G_N \tilde{X} =  \text{diag}\left( X, X \right) G_N$ and $F_W \tilde{X} = X F_W$, inequality \eqref{eq:LMI2} becomes inequality \eqref{eq:LMI3} with $\bar{K} = K T X$.
\end{proof}

In the previous theorem, inequality \eqref{eq:LMI3} is a bilinear matrix inequality as $\mathcal{E}_3$ depends on $K$. One solution is to solve an LMI with respect to the unknown variables $P, R, S, \bar{K}$ and $X$ and then update $F_W$ and continue for a given number of steps. This heuristic can be summarized below:
\begin{description}
	\setlength\itemsep{0.5em}
	\item[\hspace{-0.3cm}[Init.]\hspace{-0.6cm}] Set $l = 1$ and initiate with $K^0$;
	\item[\hspace{-0.3cm}[Step $l$, 1] \ ] Compute $F_W^l$ using $K^{l-1}$ and freeze this value. In particular, $\varepsilon_3^{opt}(j) = - \frac{1}{r_j} \tr \left( J_j \tilde{B}K^{l-1}T J_j^{\top} \right)$ for $j = 1, \dots, k$;
	\item[\hspace{-0.3cm}[Step $l$, 2] \ ] Solve LMI \eqref{eq:LMI3}, compute and freeze $K^l$;
	\item[\hspace{-0.3cm}[Term.]\hspace{-0.3cm}] Stop when $l = l_{max}$.
\end{description}

In this algorithm, we seek the gain $K^{\ell_{max}}$ such that the feasibility area obtained by Corollary \ref{sec:coroSSF} is close to the one obtained by Theorem \ref{sec:theo}, or, in other words, the norm $\|M - F_W \|_F^2$ is small.\\
The main problem with this method is that a feedback gain for $l = 0$, $h > 0$ is needed. To do so, assuming $(A, B)$ controllable, a feedback gain $K^0$ such that $A + BK^0$ is Hurwitz can be found. \cite{kolmanovskii} shows that the solution $x$ to system \eqref{eq:problem} depends continuously on the delay so there exists a small enough $h$ such that if the system described by $\dot{x}(t) = (A+BK) x(t)$ is stable, then \eqref{eq:problem} is also stable. One can then compute $F_W^1$ and start the previous algorithm. Since the LMIs are continuous in $h$, for a small enough $\Delta h$, $K^{\ell_{max}}$ is a feedback gain for $h + \Delta h$, then the algorithm can be applied again and step by step, one increases $h$ to the desired value. If it is not possible to find $\Delta h$ such that the LMIs are feasible, the algorithm stops and the maximum allowable delay with this technique has been reached. A similar methodology is widely used when it comes to solve Bilinear Matrix Inequality. It is an adaptation of a path-following method described in \cite{hassibi1999path} for example. Of course, this algorithm also introduces some conservatism.

The feedback gains obtained using this algorithm are very sensitive to its initial condition. That is the reason why it is important to know how to initialize the algorithm. This heuristic provides one solution but other initialization techniques can be used as the one described in \cite{arzelier2010mixed} with a random algorithm. This technique has the main advantage that it tries to find the best starting point according to a given criterion.

\begin{remark} The methodology described here can be used without important modification to the SSF synthesis of system:
\[
	\dot{x}(t) = A_0 x(t) + A_1x(t-h) + B u(t-h).
\]
Indeed, a slight change in $\mathcal{M}$ is needed:
\[
	\mathcal{M} = \left[ \begin{matrix} I_n & -\tilde{A}_0 & -\tilde{A}_1 & 0_{n, nN} \end{matrix} \right],
\]
and in the algorithm, $\varepsilon_3 = \frac{-1}{r_j} \tr \left( J_j  T^{-1} \left(A_1+ BK \right)T  J_j^{\top} \right)$ for $j$ from $1$ to $k$. 
\end{remark}

\begin{remark} The proposed algorithm gives a special structure to $T^{-1} B K T$. Indeed, it is close to its Jordan form.\end{remark}

\subsection{Static Output Feedback Synthesis}

In this part, $\text{rank} (C)$ is assumed to be less than or equal to $n$. Contrary to the previous part, this is not a full-state feedback anymore, but only part of the state. In this sense, the synthesis can be considered as a static output-feedback synthesis problem. The methodology of the previous part cannot apply directly because $C$ do not commute with $X$. The idea is then to assume that $X = \sigma I_n$ with $\sigma \in \mathbb{R}$, the following corollary is obtained.

\begin{coro}
	\label{sec:coroSOF}
	For a given integer $N > 0$ and a constant delay $h > 0$, if there exist matrices $P_N \in \mathbb{S}^{(N+1)n}_ +$, $S, R \in \mathbb{S}^n_ +, \bar{K} \in \mathbb{R}^{m \times p}$ and $\sigma \in \mathbb{R}$ such that the following inequality
	\begin{equation} \label{eq:LMI4}
		\Phi_{N} + \text{He}\left( \! \left( \sigma \mathcal{M} + \left[ \begin{matrix} 0_n & 0_n & -\tilde{B} \bar{K} \tilde{C} & 0_{n, nN} \end{matrix} \right] \right)^{\!\!\top} \!\! F_W \! \right) \prec 0
	\end{equation}
	holds with the same notations than in Corollary \ref{sec:coroSSF} and $\tilde{C} = CT$, then time-delay system \eqref{eq:problem} is asymptotically stable with the feedback gain $K = \sigma^{-1} \bar{K}$.
\end{coro}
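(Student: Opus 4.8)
The plan is to reproduce the proof of Corollary~\ref{sec:coroSSF}, specializing its congruence matrix to a scalar multiple of the identity. We must show that feasibility of \eqref{eq:LMI4} implies asymptotic stability of \eqref{eq:problem}. In the state-feedback case the congruence matrix $\tilde X=\text{diag}(X,\dots,X)$ could be any non-singular matrix of the prescribed Jordan block form, because for $C=I_n$ the factor $X$ could be commuted past $C$ and absorbed into the gain as $\bar K=KTX$. When $\text{rank}(C)<n$ this absorption fails, since $C$ does not commute with $X$; the remedy is to take $X=\sigma I_n$, $\sigma\in\mathbb{R}$, which commutes with everything, so that $\tilde X=\sigma I_{n(N+3)}$ and the congruence collapses to multiplication by the scalar $\sigma$.

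First I would establish $\sigma\neq0$, so that $K=\sigma^{-1}\bar K$ is well defined. This is read off the upper-left $n\times n$ block of \eqref{eq:LMI4}. Every contribution to the $(1,1)$-block of $\Phi_N$ vanishes except the one from $h^2F_N^\top R F_N$, because the first block column of $G_N$ and of each $\Gamma_N(k)$ is zero; hence that block equals $h^2R$. Since the first block column of $\mathcal M$ is $I_n$ and that of $[\,0_n\ 0_n\ -\tilde B\bar K\tilde C\ 0_{n,nN}\,]$ is $0_n$, the $(1,1)$-block of $\text{He}\bigl((\sigma\mathcal M+[\,\cdots\,])^\top F_W\bigr)$ equals $\text{He}(\sigma\mathcal E_1)=2\sigma\mathcal E_1$, where $\mathcal E_1=\sum_{j=1}^k\varepsilon_1(j)J_j^\top J_j$ is symmetric. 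Hence $h^2R+2\sigma\mathcal E_1\prec0$ with $R\succ0$, which forces $\sigma\mathcal E_1\prec0$ and in particular $\sigma\neq0$ (with the optimized slack values $\varepsilon_1(j)\equiv1$, i.e. $\mathcal E_1=I_n$, this even gives $\sigma<0$).

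Then I would do the algebraic rearrangement. Putting $K=\sigma^{-1}\bar K$ and using $\tilde B=T^{-1}B$, $\tilde C=CT$,
\[
\sigma\mathcal M+\bigl[\,0_n\ \ 0_n\ \ {-}\tilde B\bar K\tilde C\ \ 0_{n,nN}\,\bigr]=\sigma\tilde M ,
\]
with $\tilde M$ exactly the matrix of Corollary~\ref{sec:coro}. Thus \eqref{eq:LMI4} reads $\Phi_N+\text{He}\bigl(\tilde M^\top(\sigma I_n)F_W\bigr)\prec0$, which is precisely inequality \eqref{eq:LMI2} of Corollary~\ref{sec:coro} for the slack matrix $W=\sigma I_n=\sum_{j=1}^kJ_j^\top(\sigma I_{r_j})J_j$ of the prescribed block form, with $P_N\succ0$, $S,R\succ0$ unchanged. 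Corollary~\ref{sec:coro}, applied with $A_d=BKC$ — i.e. to the closed loop of \eqref{eq:problem} driven by $u=Ky=KCx$ — then yields asymptotic stability of \eqref{eq:modifiedProblem}, hence of \eqref{eq:problem}, with the static output-feedback gain $K=\sigma^{-1}\bar K$.

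This corollary is therefore a direct specialization of Corollary~\ref{sec:coroSSF}, and there is no substantial obstacle beyond the two bookkeeping steps above; the one point that genuinely requires its own argument is $\sigma\neq0$ (equivalently, invertibility of the inverted slack variable $W=\sigma I_n$), settled by the block inspection above. One should also verify that the non-square $C$ introduces no dimensional mismatch, but since $\tilde C=CT$ enters \eqref{eq:LMI4} only through the product $\tilde B\bar K\tilde C$ alongside the scalar $\sigma$, the sizes work out exactly as in the SSF case.
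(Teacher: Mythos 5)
Your proof is correct and follows essentially the same route as the paper, which obtains this corollary by specializing the slack matrix to $X=\sigma I_n$ in the derivation of Corollary \ref{sec:coroSSF} and, as stated in the remark following it, reads $\sigma\neq 0$ off the LMI exactly as you do via the $(1,1)$ block (the same argument used in Corollary \ref{sec:coro} to get $\text{He}(W)\prec 0$). Your reduction of \eqref{eq:LMI4} to \eqref{eq:LMI2} with $W=\sigma I_n$ simply writes out in full the details the paper leaves implicit.
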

\begin{remark} As before, the LMI condition ensures $\sigma \neq 0$. As the same result than in the previous corollary is applied but with a restriction on $X$, then a more conservative result is expected (see the first example in the following section). 
\end{remark}

At this stage,  a sequence of LMIs to be solved using the same algorithm described in the previous subsection is proposed. The main difficulty remains in finding a good initialization point.

\section{Examples and Comparisons}

In this part, we show the efficiency of the method proposed above. Two examples will be discussed. For all these simulations, $l_{max}$ was set to $3$.

\subsection{First example: $\tr(A) = 0$}

For this first example, consider the following system:
\begin{equation} \label{eq:ex1}
	A = \left[ \begin{smallmatrix} 0.2 & 0 \\ 0.2 & -0.2 \end{smallmatrix} \right], \quad B = \left[ \begin{smallmatrix} -1 & 0 \\ -1 & -1 \end{smallmatrix} \right], \quad C = I_2.
\end{equation}

To initiate the algorithm, we choose $K = \left[ \begin{smallmatrix} 1.2 & 0 \\ -1 & 1.8 \end{smallmatrix} \right]$. The maximum delay has been computed using the algorithm presented above. The different strategies have been compared in Figure \ref{fig:ex1} to see the conservatism introduced by each of them. 
\begin{figure}
	\centering
	\includegraphics[width=9cm]{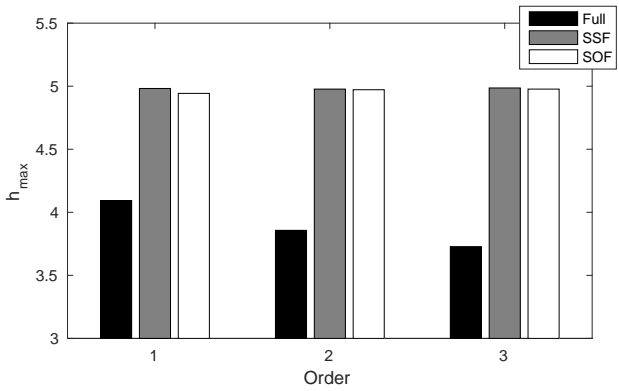}
	\caption{Maximum delay for which there exists a feedback gain which stabilizes system \eqref{eq:ex1}. In the legend, ``full'' corresponds to Lemma \ref{sec:finsler3} (without the use of the Jordan form), ``SSF'' is the application of Corollary \ref{sec:coroSSF} and ``SOF'' is the application of Corollary \ref{sec:coroSOF}.}
	\label{fig:ex1}
\end{figure}
It appears clearly that working on the Jordan form with a specific structure for $\mathcal{I}$ leads to better results ($25\%$ improvement). In this example of small dimension, there is no impact while increasing the order $N$, it even leads to poorer results for the case without any pretreatment. Then, there is no hierarchy in the stabilization even if there is one in the stability. The other conclusion which will be emphasized in the second example is the small conservatism introduced between Lemmas \ref{sec:coroSSF} and \ref{sec:coroSOF}. Unfortunately, there is no proof showing that the methodology of this paper always lead to better results but it appears on many examples that this is the case.

Table \ref{tab:ex1} is a comparison between different synthesis. The two last parts referred to Theorem \ref{sec:theo} with the use of Lemma 2. Two versions are used: $\varepsilon^1$ stands for $\varepsilon_1 = \varepsilon_2 = \varepsilon_3 = 1$ and all the others set to $0$, while $\varepsilon^2$ is the same values but with $\varepsilon_2 = 0.5$. In \cite{barreau:hal-01488920}, we developed another inequality, closely related to $\varepsilon^2, N = 1$. It was originally made for distributed delay but it can be adapted to deal with point-wise delay easily. $h_{max}^{spectral}$ is the maximal theoretical bound for stability for system \eqref{eq:modifiedProblem} with $A_d = BK$. This bound has been obtained using \cite{freq}.
\begin{table}[h]
	\centering
	\begin{tabular}{c|c|c|c|c}
		& Cor. \ref{sec:coroSSF} & $K$ & Thm. \ref{sec:theo} & $h_{max}^{spectral}$ \\
		\hline
		$N = 1$ & $4.982$ & $\left[ \begin{smallmatrix} 0.1979 & 0.0057 \\ -0.1195 & 0.0383 \end{smallmatrix} \right]$ & $4.986$ & $4.987$ \\
		$N = 2$ & $4.977$ & $\left[ \begin{smallmatrix} 0.2011 & 0.0001 \\ -0.1463 & 0.0915 \end{smallmatrix} \right]$ & $4.980$ & $4.980$ \\
		$N = 3$ & $4.986$ & $\left[ \begin{smallmatrix} 0.2005 & 0 \\ -0.1375 & 0.0744 \end{smallmatrix} \right]$ &  $4.991$ & $4.991$ \\
		\hhline{=|=|=|=|=}
		$\varepsilon^1$ & Lem. 2 & $K$ & Thm \ref{sec:theo} & $h_{max}^{spectral}$ \\
		\hline
		$N = 1$ & $1.184$ & $\left[ \begin{smallmatrix} 0.9674 & 0 \\ -31519 & 0.2178 \end{smallmatrix} \right]$ & $1.40$ & $1.43$ \\
		$N = 2$ & $1.186$ & $\left[ \begin{smallmatrix} 0.5802 & 0 \\ -0.1311 & 0.2064 \end{smallmatrix} \right]$ & $2.23$ & $2.23$ \\
		$N = 3$ & $1.186$ & $\left[ \begin{smallmatrix} 0.5817 & 0 \\ -0.1991 & 0.2582 \end{smallmatrix} \right]$ & $2.21$ & $2.23$ \\
		\hhline{=|=|=|=|=}
		$\varepsilon^2$ & Lem. 2 & $K$ & Thm \ref{sec:theo} & $h_{max}^{spectral}$ \\
		\hline
		$N = 1$ & $1.256$ & $\left[ \begin{smallmatrix} 0.6693 & 0 \\ -0.2440 & 0.2503 \end{smallmatrix} \right]$ & $1.89$ & $1.98$ \\
		$N = 2$ & $1.257$ & $\left[ \begin{smallmatrix} 0.6642 & 0 \\ -0.2603 & 0.2671 \end{smallmatrix} \right]$ & $1.89$ & $1.99$ \\
		$N = 3$ & $1.257$ & $\left[ \begin{smallmatrix} 0.6933 & 0 \\ 9.1431 & 0.2883 \end{smallmatrix} \right]$ &  $1.89$ & $1.92$ \\
	\end{tabular}
	\caption{Maximum delays for which it is possible to synthesize a static state feedback gain $K$ for system \eqref{eq:ex1}. }
	\label{tab:ex1}
	\vspace{-0.3cm}
\end{table}
\begin{table}[h]
	\centering
	\begin{tabular}{c|c|c|c|c|c|c}
		\multirow{2}*{Solver} & \multicolumn{2}{|c|}{$N = 1$} & \multicolumn{2}{|c|}{$N = 2$} & \multicolumn{2}{|c}{$N = 3$} \\
		& $h_{max}$ & \#It. & $h_{max}$ & \#It. & $h_{max}$ & \#It. \\
		\hline
		Cor. \ref{sec:coroSSF} & $4.98$ & $82$ & $4.97$ & $77$ & $4.98$ & $73$\\
		\hline
		Thm. \ref{sec:theo} / PenBMI & $5.5$ & $446$ & $5.71$ & $424$ & $5.2$ & $376$ \\
	\end{tabular}
	\caption{Maximum allowable delay $h_{max}$ and number of iterations \#It. for SSF of \eqref{eq:ex1} using Theorem \ref{sec:theo} and a BMI solver. The number of iteration is an average on each step of the algorithm. }
	\label{tab:penlab}
	\vspace{-0.3cm}
\end{table}
The naive version of \cite{barreau:hal-01488920} or random choices of $\varepsilon$ is roughly $4$ times less effective than the method developed here. The most important characteristic is the difference between the second and fourth columns. We also want to point out that there is a very tight bound between Corollary \ref{sec:coroSSF} and Theorem \ref{sec:theo}. That means the synthesized gain $K$ is at the boundary of the area of stability. In other words, there is nearly an equivalence between Corollary \ref{sec:coroSSF} and Theorem \ref{sec:theo} for this example. The choices for $\varepsilon$ induce a gap between these two values but the difference is small in the optimized case. The bound obtained by Theorem \ref{sec:theo} is also very close to the real upper bound for the delay meaning the stability area is correctly estimated. Notice that it does not imply a better synthesis but it is a first step. Even if there is not a significant difference between the orders, for large systems or systems with a distributed delay $\int_{t-h}^t x_t(s) ds$ there is a clear improvement as $N$ increases. Using BMIBNB from \cite{1393890} does not provide any solution and $h_{max} = 0$. \\
The lines about PENLAB in Table \ref{tab:penlab} are solutions to the BMI problem of Theorem \ref{sec:theo} implemented with a classical path-algorithm for increasing $h$ and the same initial condition. The efficiency of this BMI solver has been showed in \cite{henrion2005solving,kocvara2012pennon} for example. The results obtained are clearly better but it takes a lot more time because it is an NP-hard problem and the complexity is unlikely to be polynomial \cite{vanantwerp2000tutorial}. The number of iterations to converge is roughly five time higher with the BMI solver, showing that this solution cannot be easily adapted to larger system. Moreover, we didn't take into account the time needed to evaluate the hessian matrix and it is by far the most time-expensive part of the BMI solver. It is also known to be highly sensitive to its initial value. Moreover, the differences between Corollary \ref{sec:coroSSF} and PenLab at the same order are not that important.

%
%

\subsection{Second example: $B$ is not invertible}

This time, we consider the SSF only and the $B$ matrix is a single column:
\begin{equation} \label{eq:ex2}
	A = \left[ \begin{smallmatrix} 0 & 1 \\ -2 & -0.1 \end{smallmatrix} \right], \quad B = \left[ \begin{smallmatrix} 0 \\ 1 \end{smallmatrix} \right].
\end{equation}

With the starting point $K = \left[ \begin{smallmatrix} -1 & -5 \end{smallmatrix} \right]$ at $h = 0.1$, the maximum allowable delay is $h_{max} = 0.602$ with $K =  \left[ \begin{smallmatrix} -8.49 & 20.04 \end{smallmatrix} \right]$ for the order $N = 1$. Using PenLab and Theorem \ref{sec:theo}, we get $h_{max} = 1.678$ but it takes a longer timer and cannot be applied to large systems. Several conclusions can be drawn from this situation. First, it is indeed possible to deal with singular matrices. Considering $T$ is the matrix such that $T^{-1} A T$ is in its jordan normal form, numerically, $T^{-1} T$ should be $I_2$ which is not exactly the case. The process to transform $A$ in its Jordan form is numerically not robust and can lead to uncertainties. One can use another algorithm like Schur decomposition but it may not prevent the spreading of eigenvalues. And finally, if one sets $C = \left[ \begin{smallmatrix} -1 & -5 \end{smallmatrix} \right]$ and $K = 1$ as the initial point the algorithm does not provide any solution. So it is needed to have a full $X$ and not only a scalar.

\section{Conclusion}

As a conclusion, an efficient method using a reduced and optimized number of slack variables has been proposed to deal with the synthesis problem of a time-delay system. The comparison with a BMI solver shows a low complexity for similar results. This methodology ca be easily adapted to a large variety of problems: sampled systems, systems with distributed delays... Nevertheless, this approach may be a more generic using the tools developed in the book \cite{Svariable} about S-variables. 

\bibliographystyle{IEEEtran}
\bibliography{IEEEabrv,report_draft}

\begin{thebibliography}{10}
\providecommand{\url}[1]{#1}
\csname url@rmstyle\endcsname
\providecommand{\newblock}{\relax}
\providecommand{\bibinfo}[2]{#2}
\providecommand\BIBentrySTDinterwordspacing{\spaceskip=0pt\relax}
\providecommand\BIBentryALTinterwordstretchfactor{4}
\providecommand\BIBentryALTinterwordspacing{\spaceskip=\fontdimen2\font plus
\BIBentryALTinterwordstretchfactor\fontdimen3\font minus
  \fontdimen4\font\relax}
\providecommand\BIBforeignlanguage[2]{{%
\expandafter\ifx\csname l@#1\endcsname\relax
\typeout{** WARNING: IEEEtran.bst: No hyphenation pattern has been}%
\typeout{** loaded for the language `#1'. Using the pattern for}%
\typeout{** the default language instead.}%
\else
\language=\csname l@#1\endcsname
\fi
#2}}

\bibitem{Zampieri:IFAC:2008}
S.~Zampieri, ``A survey of recent results in {N}etworked {C}ontrol {S}ystems,''
  in \emph{Proc. $17^{th}$ IFAC World Congress}, Seoul, Korea, July 2008.

\bibitem{opac-b1100602}
K.~Gu, V.~L. Kharitonov, and J.~Chen, \emph{Stability of time-delay systems},
  ser. Control engineering.\hskip 1em plus 0.5em minus 0.4em\relax Boston,
  Basel, Berlin: Birkhäuser, 2003.

\bibitem{norm}
E.~Fridman, \emph{Introduction to Time-Delay Systems}, ser. Analysis and
  Control.\hskip 1em plus 0.5em minus 0.4em\relax Birkh{\"a}user, 2014.

\bibitem{Palmor}
Z.~Palmor, ``Time-delay compensation - {Smith Predictor} and its
  modifications,'' \emph{The control handbook, New York: CRC and IEEE Press},
  1996.

\bibitem{krstic}
M.~Krstic, \emph{Delay Compensation for Nonlinear, Adaptive, and {PDE}
  Systems}, ser. Systems and Control, Foundation and applications.\hskip 1em
  plus 0.5em minus 0.4em\relax Birkh\"auser Basel, 2009.

\bibitem{V-AssDamLafRich:CDC:1999}
V.~V. Assche, M.~Dambrine, J.-F. Lafay, and J.-P. Richard, ``Some problems
  arising in the implementation of distributed-delay control laws,'' in
  \emph{Proceedings of the 38th Conference on Decision and Control}, December
  1999.

\bibitem{FRISEURIC:AUT:04}
E.~Fridman, A.~Seuret, and J.-P. Richard, ``Robust sampled-data stabilization
  of linear systems: An input delay approach,'' \emph{Automatica}, vol.~40,
  no.~8, pp. 1141--1446, 2004.

\bibitem{li1997delay}
X.~Li and C.~E. De~Souza, ``Delay-dependent robust stability and stabilization
  of uncertain linear delay systems: a linear matrix inequality approach,''
  \emph{IEEE Transactions on Automatic Control}, vol.~42, no.~8, pp.
  1144--1148, 1997.

\bibitem{fridman2002improved}
E.~Fridman and U.~Shaked, ``An improved stabilization method for linear
  time-delay systems,'' \emph{IEEE Transactions on Automatic Control}, vol.~47,
  no.~11, pp. 1931--1937, 2002.

\bibitem{7045593}
H.~B. Zeng, Y.~He, M.~Wu, and J.~She, ``Free-matrix-based integral inequality
  for stability analysis of systems with time-varying delay,'' \emph{IEEE
  Transactions on Automatic Control}, vol.~60, no.~10, pp. 2768--2772, 2015.

\bibitem{zhang2005delay}
X.-M. Zhang, M.~Wu, J.-H. She, and Y.~He, ``Delay-dependent stabilization of
  linear systems with time-varying state and input delays,'' \emph{Automatica},
  vol.~41, no.~8, pp. 1405--1412, 2005.

\bibitem{CHEN20061067}
W.-H. Chen and W.~X. Zheng, ``On improved robust stabilization of uncertain
  systems with unknown input delay,'' \emph{Automatica}, vol.~42, no.~6, pp.
  1067 -- 1072, 2006.

\bibitem{seuret:hal-01065142}
A.~Seuret and F.~Gouaisbaut, ``Hierarchy of {LMI} conditions for the stability
  analysis of time delay systems,'' \emph{Systems and Control Letters},
  vol.~81, pp. 1--7, July 2015.

\bibitem{kolmanovskii}
V.~Kolmanovskii and A.~Myshkis, \emph{Introduction to the theory and
  applications of functional differential equations}.\hskip 1em plus 0.5em
  minus 0.4em\relax Springer Science \& Business Media, 2013, vol. 463.

\bibitem{OLGAC199493}
N.~Olgac and B.~Holm-Hansen, ``A novel active vibration absorption technique:
  Delayed resonator,'' \emph{Journal of Sound and Vibration}, vol. 176, no.~1,
  pp. 93 -- 104, 1994.

\bibitem{Svariable}
Y.~Ebihara, D.~Peaucelle, and D.~Arzelier, \emph{S-Variable Approach to
  {LMI}-Based Robust Control}, ser. Communications and Control
  Engineering.\hskip 1em plus 0.5em minus 0.4em\relax Springer, 2015, vol.~17.

\bibitem{finsler}
R.~Skelton, T.~Iwasaki, and D.~Grigoriadis, \emph{A Unified Algebraic Approach
  To Control Design}, ser. Series in Systems and Control.\hskip 1em plus 0.5em
  minus 0.4em\relax Taylor \& Francis, 1997.

\bibitem{wirtinger}
A.~Seuret and F.~Gouaisbaut, ``Wirtinger-based integral inequality: Application
  to time-delay systems,'' \emph{Automatica}, vol.~49, no.~9, pp. 2860 -- 2866,
  2013.

\bibitem{golub1976ill}
G.~H. Golub and J.~H. Wilkinson, ``Ill-conditioned eigensystems and the
  computation of the jordan canonical form,'' \emph{SIAM review}, vol.~18,
  no.~4, pp. 578--619, 1976.

\bibitem{vanantwerp2000tutorial}
J.~G. Van~Antwerp and R.~D. Braatz, ``A tutorial on linear and bilinear matrix
  inequalities,'' \emph{Journal of process control}, vol.~10, no.~4, pp.
  363--385, 2000.

\bibitem{hassibi1999path}
A.~Hassibi, J.~How, and S.~Boyd, ``A path-following method for solving {BMI}
  problems in control,'' in \emph{Proceedings of the American Control
  Conference ({ACC}).}, vol.~2, 1999, pp. 1385--1389.

\bibitem{arzelier2010mixed}
D.~Arzelier, E.~N. Gryazina, D.~Peaucelle, and B.~T. Polyak, ``Mixed
  {LMI}/randomized methods for static output feedback control design,'' in
  \emph{American Control Conference (ACC), 2010}.\hskip 1em plus 0.5em minus
  0.4em\relax IEEE, 2010, pp. 4683--4688.

\bibitem{barreau:hal-01488920}
M.~Barreau, A.~Seuret, and F.~Gouaisbaut, ``Wirtinger-based exponential
  stability for time-delay systems,'' 2017, {IFAC} World Congress 2017,
  Toulouse.

\bibitem{freq}
D.~Breda, S.~Maset, and R.~Vermiglio, \emph{Stability of linear delay
  differential equations – A numerical approach with {MATLAB}}, ser.
  SpringerBriefs in Control, Automation and Robotics.\hskip 1em plus 0.5em
  minus 0.4em\relax Springer, 2015.

\bibitem{1393890}
J.~L{\"o}fberg, ``{YALMIP}: a toolbox for modeling and optimization in
  {MATLAB},'' \emph{Computer Aided Control Systems Design, 2004 IEEE
  International Symposium}, pp. 284--289, Sept 2004.

\bibitem{henrion2005solving}
D.~Henrion, J.~Lofberg, M.~Kocvara, and M.~Stingl, ``Solving polynomial static
  output feedback problems with {PENBMI},'' in \emph{44th IEEE Conference on
  Decision and Control, European Control Conference, CDC-ECC.}\hskip 1em plus
  0.5em minus 0.4em\relax IEEE, 2005, pp. 7581--7586.

\bibitem{kocvara2012pennon}
M.~Kocvara and M.~Stingl, ``Pennon: Software for linear and nonlinear matrix
  inequalities,'' in \emph{Handbook on semidefinite, conic and polynomial
  optimization}.\hskip 1em plus 0.5em minus 0.4em\relax Springer, 2012, pp.
  755--791.

\end{thebibliography}

\end{document}